\documentclass[reqno,10pt]{amsart}

\usepackage{amsmath,amsfonts,amssymb,amsthm,epsfig,amscd,}
\usepackage[]{fontenc}
\usepackage[latin1]{inputenc}
\usepackage{enumerate}
\usepackage[cmtip,all]{xy}
\usepackage{tabularx,multirow}
\usepackage{color}
\usepackage{enumitem}
\setlist[itemize]{itemsep=0ex,parsep=0ex,topsep=0ex,partopsep=0ex,leftmargin=4.5ex}

\voffset=-2cm \textheight=24,5cm \hoffset=-.7cm \textwidth=18cm
\oddsidemargin=.08cm \evensidemargin=-.08cm \footskip=35pt
\linespread{1.10}
\parindent=20pt

\sloppy \allowdisplaybreaks \numberwithin{equation}{section}

\newtheorem{theorem}{Theorem}[section]
\newtheorem{step}{Step}
\newtheorem{result}{Result}

\newtheorem{claim}[theorem]{Claim}

\theoremstyle{definition}

\theoremstyle{remark}

\DeclareMathOperator{\codim}{codim}

\def\leq{{\leqslant}}
\def\geq{{\geqslant}}

\begin{document}

\title[On complete intersections containing a linear subspace]{On complete intersections containing a linear subspace}

\thanks{ This collaboration has benefitted of funding from the
research project \emph{``Families of curves: their moduli and their related varieties"} (CUP: E81-18000100005) - Mission Sustainability - University of Rome Tor Vergata.}

\author{Francesco Bastianelli}
\address{Francesco Bastianelli, Dipartimento  di Matematica, Universit\`{a} degli Studi di Bari ``Aldo Moro", Via Edoardo Orabona 4, 70125 Bari -- Italy}
\email{francesco.bastianelli@uniba.it}

\author{Ciro Ciliberto}
\address{Ciro Ciliberto, Dipartimento  di Matematica, Universit\`{a} degli Studi di Roma ``Tor Vergata", Viale della Ricerca Scientifica 1, 00133 Roma -- Italy}
\email{cilibert@mat.uniroma2.it}

\author{Flaminio Flamini}
\address{Flaminio Flamini, Dipartimento  di Matematica, Universit\`{a} degli Studi di Roma ``Tor Vergata", Viale della Ricerca Scientifica 1, 00133 Roma -- Italy}
\email{flamini@mat.uniroma2.it}

\author{Paola Supino}
\address{Paola Supino, Dipartimento  di Matematica e Fisica, Universit\`{a} degli Studi ``Roma Tre", Largo S. L. Murialdo 1, 00146 Roma -- Italy}
\email{supino@mat.uniroma3.it}

\begin{abstract} Consider the Fano scheme $F_k(Y)$ parameterizing $k$--dimensional linear  subspaces contained in a  complete intersection $Y \subset \mathbb{P}^m$ of multi--degree $\underline{d} = (d_1, \ldots, d_s)$.
It is known that, if  $t := \sum_{i=1}^s \binom{d_i +k}{k}-(k+1) (m-k)\leqslant 0$  and $\Pi_{i=1}^sd_i >2$, for $Y$ a general complete intersection as above, then $F_k(Y)$ has dimension $-t$.
In this paper we consider the case $t> 0$.
Then the locus $W_{\underline{d},k}$ of all complete intersections as above containing a $k$--dimensional linear subspace is irreducible and turns out to have codimension $t$ in the parameter space of all complete intersections with the given multi--degree. Moreover, we prove that for general $[Y]\in W_{\underline{d},k}$ the scheme $F_k(Y)$ is zero--dimensional of length one. This implies that $W_{\underline{d},k}$ is rational.
\end{abstract}

\maketitle

\section{Introduction}

In this paper we will be concerned with the {\em Fano scheme} $F_k(Y)$, parameterizing $k$--dimensional linear subspaces contained in a subvariety $Y \subset \mathbb{P}^m$, when $Y$ is a complete intersection of multi--degree $\underline{d} = (d_1, \ldots, d_s)$, with $1\leqslant s \leqslant m-2$.
We will assume that $Y$ is neither a linear subspace nor a quadric, cases to be considered as trivial.
Thus we will constantly assume that  $\Pi_{i=1}^s\;d_i >2$.

Let $S_{\underline{d}}:= \bigoplus_{i=1}^s H^{0}\left(\mathbb P^ m,\mathcal{O}_{\mathbb{P}^{m}}(d_i) \right)$, and consider its Zariski open subset $S^*_{\underline{d}}:= \bigoplus_{i=1}^s \big ( H^{0}\left(\mathbb P^ m,\mathcal{O}_{\mathbb{P}^{m}}(d_i) \right)\setminus \{0\}\big )$.
For any $u := (g_1,\ldots,g_s) \in S^*_{\underline{d}}$, let $Y_u:=V (g_1,\ldots,g_s)\subset \mathbb{P}^m$ denote  the closed subscheme defined by the vanishing of the polynomials $g_1,\ldots,g_s$. When $u \in S^*_{\underline{d}}$ is general, $Y_u$ is a smooth, irreducible variety of dimension $m-s \geqslant 2$.
For any integer $k \geq 1$, we define the locus
$$
W_{\underline{d},k}:=\left\{\left.u\in S^*_{\underline{d}} \right| \; F_k(Y_u) \neq \emptyset \; \right\} \subseteq S^*_{\underline{d}}$$and set
\[
t(m,k,\underline {d}):=\sum_{i=1}^s \binom{d_i +k}{k} - (k+1)(m-k).
\]
If no confusion arises, we will simply denote $t(m,k,\underline {d})$ by $t$.

First of all, consider the case $t \leq 0$. This is the most studied case in the literature, and it is now well understood (cf.\;e.g.\;\cite{Bo,DeMa, Morin,Pre}). In particular, the following holds.

\begin{result}\label{res1} Let $m,k,s$ and $\underline{d}=(d_1,\dots,d_s)$ be such that $\Pi_{i=1}^s d_i >2$ and $t\leq 0$. Then:
\begin{itemize}
\item[(a)] $W_{\underline{d},k} = S_{\underline{d}}^*$;
\item[(b)] for general $u \in S_{\underline{d}}^*$,  $F_k(Y_u)$ is smooth, of  dimension $\dim(F_k(Y_u)) = -t$ and it is
irreducible when  $\dim(F_k(Y_u))\geq 1$.
\end{itemize}
\end{result}
The proof of this result can be found e.g. in \cite[Prop.2.1,\;Cor.2.2,\;Thm.\;4.1]{Bo}, for the complex case, and in \cite[Thm.\;2.1,\;(b)\;\&\;(c)]{DeMa}, for any algebraically closed field. In addition, in \cite[Thm.\;4.3]{DeMa} the authors compute $\deg(F_k(Y_u))$ under the Pl\"ucker embedding $F_k(Y_u) \subset \mathbb{G}(k,m) \hookrightarrow \mathbb{P}^N$, with $N={{m+1}\choose {k+1}}-1$. Their formulas extend  to any $k \geq 1$ enumerative formulas by Libgober in \cite{L}, who computed $\deg(F_1(Y_u))$ when $t(m,1,\underline {d})=0$.

On the other hand, we are interested in the case $t >  0$, where the known results can be summarized as follows.

\begin{result}\label{res2}  Let $m,k,s$ and $\underline{d}=(d_1,\dots,d_s)$ be such that $\Pi_{i=1}^s d_i >2$ and $t> 0$. Then:
\begin{itemize}
\item[(a)] $W_{\underline{d},k} \subsetneq S_{\underline{d}}^*$.
\item[(b)] $W_{\underline{d},k} $ contains points $u$ for which $Y_u \subset \mathbb{P}^m$ is a smooth complete intersection of dimension $m-s$  if and only if $s \leq m-2k$.
\item[(c)] For $s\leq m-2k$, set $H_{\underline{d},k} := \left\{ \left.u \in W_{\underline{d},k} \right| Y_u \subset \mathbb{P}^m \text{\emph{ is smooth, of dimension }}m-s\right\}$.
If $d_i \geq 2$ for any $1 \leq i \leq s$, then $H_{\underline{d},k}$ is irreducible, unirational and
$\codim_{S^*_{\underline{d}}} (H_{\underline{d},k} ) = t$. Moreover,
for general $u \in H_{\underline{d},k}$, $F_k(Y_u)$ is a zero--dimensional scheme.
\end{itemize}
\end{result}

The proof of Result \ref{res2}\;(a) is contained in \cite[Thm.\;2.1\;(a)]{DeMa}, whereas that of assertions (b) and (c) is contained in \cite[Cor.\;1.2,\;Rem.\;3.4]{Miyazaki}; both proofs therein hold for any algebraically closed field.

The main result of this paper, which improves on Result \ref{res2},  is the following.

\begin{theorem}\label{thm:Predonzan+}  Let $m,k,s$ and $\underline{d}=(d_1,\dots,d_s)$ be such that $\Pi_{i=1}^s d_i >2$ and $t> 0$.  Then $W_{\underline{d},k} \subsetneq S^*_{\underline{d}}$ is non--empty, irreducible and rational, with $\codim_{S^*_{\underline{d}}} (W_{\underline{d},k}) = t$.
Furthermore, for a general point $u \in W_{\underline{d},k}$, the variety $Y_u \subset \mathbb{P}^m$ is a complete intersection of dimension $m-s$ whose Fano scheme $F_k(Y_u)$ is a zero--dimensional scheme of length one.
Moreover, $Y_u$ has singular locus of dimension $\max\{-1,2k+s-m-1\}$ along its unique $k$--dimensional linear subspace (in particular $Y_u$ is smooth if and only if $m-s\geq 2k$).
\end{theorem}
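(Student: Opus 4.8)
The plan is to work with the incidence variety
\[
\Phi_{\underline{d},k} := \left\{ (u,[\Lambda]) \in S^*_{\underline{d}} \times \mathbb{G}(k,m) \mid \Lambda \subseteq Y_u \right\},
\]
together with its two projections $p\colon \Phi_{\underline{d},k} \to S^*_{\underline{d}}$ and $q\colon \Phi_{\underline{d},k} \to \mathbb{G}(k,m)$, so that $W_{\underline{d},k} = p(\Phi_{\underline{d},k})$. First I would analyze $q$: since $\mathbb{G}(k,m)$ is homogeneous, it suffices to look at a single fibre, say over the standard $\Lambda_0 = \{x_{k+1} = \cdots = x_m = 0\}$. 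Imposing $\Lambda_0 \subseteq Y_u$ means that each $g_i$ lies in the ideal $(x_{k+1},\dots,x_m)$, i.e. the restriction $g_i|_{\Lambda_0} \in H^0(\Lambda_0,\mathcal{O}(d_i))$ vanishes; this is $\sum_i \binom{d_i+k}{k}$ linear conditions on $u$, so $q^{-1}([\Lambda_0])$ is a linear subspace of $S_{\underline{d}}$ of that codimension (intersected with $S^*_{\underline{d}}$). Hence $\Phi_{\underline{d},k}$ is smooth, irreducible, a vector-bundle-like fibration over $\mathbb{G}(k,m)$, of dimension $\dim S_{\underline{d}} + (k+1)(m-k) - \sum_i \binom{d_i+k}{k} = \dim S_{\underline{d}} - t$. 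In particular $W_{\underline{d},k}$, being the image of an irreducible variety, is irreducible, and the $t> 0$ hypothesis together with Result~\ref{res2}(a) gives $W_{\underline{d},k} \subsetneq S^*_{\underline{d}}$.

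Next I would establish that $p$ is generically finite of degree one onto its image; this simultaneously yields $\codim_{S^*_{\underline{d}}}(W_{\underline{d},k}) = t$, the length-one statement for $F_k(Y_u)$, and rationality. The key is to exhibit a single point $(u_0,[\Lambda_0]) \in \Phi_{\underline{d},k}$ at which the differential $dp$ is injective — equivalently, $F_k(Y_{u_0})$ is reduced of dimension zero at $[\Lambda_0]$, and $\Lambda_0$ is the unique $k$-plane on $Y_{u_0}$. For this I would choose the $g_i$ of a special shape, e.g. $g_i = \sum_{j=k+1}^m x_j \ell_{ij}$ with the linear forms $\ell_{ij}$ generic, or better a normal form adapted to each $d_i$, and compute the tangent space to $F_k(Y_{u_0})$ using the standard identification $T_{[\Lambda_0]}F_k(Y_{u_0}) = \ker\!\big(H^0(\mathcal{N}_{\Lambda_0/\mathbb{P}^m}) \to \bigoplus_i H^0(\mathcal{O}_{\Lambda_0}(d_i))\big)$, where the map sends a normal vector field to the derivative of each $g_i$ along it. A generic choice of the $\ell_{ij}$ makes this map surjective (hence the kernel zero, since $h^0(\mathcal{N}_{\Lambda_0/\mathbb{P}^m}) = (k+1)(m-k) < \sum_i \binom{d_i+k}{k}$ when $t>0$), giving $dp_{(u_0,[\Lambda_0])}$ injective, so $p$ is dominant onto an image of codimension exactly $t$ and has degree one there. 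For uniqueness of the $k$-plane I would argue that for such special $u_0$ any $k$-plane on $Y_{u_0}$ must be $\Lambda_0$ — by the explicit equations, a $k$-plane in $Y_{u_0}$ forces the generic linear forms $\ell_{ij}$ to satisfy incompatible rank conditions unless it coincides with $\Lambda_0$; alternatively one deduces uniqueness from the fibre dimension count plus semicontinuity. Rationality of $W_{\underline{d},k}$ then follows because $p$ being birational onto its image identifies $W_{\underline{d},k}$ birationally with $\Phi_{\underline{d},k}$, which is rational, being birational to a vector bundle over the rational variety $\mathbb{G}(k,m)$.

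Finally, for the statement about the singular locus, I would again use a general $[Y]=[Y_u] \in W_{\underline{d},k}$ with its unique $k$-plane $\Lambda = \Lambda_u$, and compute $\operatorname{Sing}(Y_u)$ directly from the Jacobian of $(g_1,\dots,g_s)$ along $\Lambda$. Writing $g_i$ in the coordinates adapted to $\Lambda$ (the "$\Lambda_0$" normal form above, now with generic coefficients subject only to $\Lambda \subseteq Y_u$), the partial derivatives restricted to $\Lambda$ are controlled by an $s \times (m-k)$ matrix of linear forms on $\Lambda \cong \mathbb{P}^k$; for general coefficients this matrix drops rank exactly on the expected determinantal locus, which has dimension $\max\{-1,\,k - (m-k-s+1)\} = \max\{-1,\,2k+s-m-1\}$ inside $\Lambda$. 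One then checks that away from $\Lambda$ the general $Y_u$ is smooth — e.g. by a Bertini-type argument on the open part of $\Phi_{\underline{d},k}$, or by noting that the locus of $u\in W_{\underline{d},k}$ whose associated $Y_u$ is singular off $\Lambda_u$ is a proper closed subset. In particular $Y_u$ is smooth precisely when $2k+s-m-1 < 0$, i.e. $m-s \geq 2k$, matching Result~\ref{res2}(b). The main obstacle, I expect, is the second step — proving that $dp$ is injective at a well-chosen point and, hand in hand with it, that the $k$-plane on that special $Y_{u_0}$ is genuinely unique; both require a careful choice of normal form for the $g_i$ that is simultaneously explicit enough to compute the normal-bundle map and rigid enough to pin down $\Lambda_0$, and the bookkeeping of the conditions $\sum_i\binom{d_i+k}{k}$ versus $(k+1)(m-k)$ has to be done with care when some $d_i$ equal $1$.
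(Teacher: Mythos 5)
Your overall architecture coincides with the paper's: your $\Phi_{\underline{d},k}$ is the paper's incidence variety $J$, the fibre/dimension count giving $\dim(J)=\dim(S^*_{\underline{d}})-t$ is identical, and your tangent--space computation $T_{[\Lambda_0]}F_k(Y_{u_0})=\ker\big(H^0(N_{\Lambda_0/\mathbb{P}^m})\to\bigoplus_i H^0(\mathcal{O}_{\Lambda_0}(d_i))\big)$ with $g_i=\sum_j x_j\,p_i^{(j)}$ is exactly the paper's Step 1, as is the Jacobian/determinantal computation of the singular locus. Two local remarks on that part: for $t>0$ the normal--bundle map cannot be \emph{surjective} (the source has dimension $(k+1)(m-k)$, strictly smaller than the target); you mean injective, i.e.\ of maximal rank. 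And even maximal rank for generic coefficients is not free: the relevant matrix has repeated entries (each coefficient $c^{(h)}_{i,\underline{\mu}}$ occurs in several positions), so one cannot simply invoke "a generic matrix has maximal rank"; the paper uses Borcea's lexicographic--ordering determinant argument to produce a monomial of $\det(\widehat{C})$ with coefficient $\pm 1$.

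The genuine gap is the degree--one statement, which you correctly flag as the main obstacle but do not close. Injectivity of $dp$ at $(u_0,[\Lambda_0])$ is \emph{not} equivalent to $\Lambda_0$ being the unique $k$--plane on $Y_{u_0}$: it only says that $[\Lambda_0]$ is an isolated reduced point of the fibre $p^{-1}(u_0)=F_k(Y_{u_0})$, and says nothing about other components of that fibre. Your two proposed fixes are both incomplete: (i) the "incompatible rank conditions" ruling out a second $k$--plane on the special $Y_{u_0}$ are asserted, not exhibited, and this is precisely the hard point; (ii) "fibre dimension count plus semicontinuity" would indeed yield birationality (via properness of $p$ over $S^*_{\underline{d}}$) \emph{provided} you first produce one $u_0$ whose whole fibre is a single reduced point --- which is again the missing step. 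The paper closes this globally: it stratifies the locus $I$ of triples $([\Pi_1],[\Pi_2],u)$ with $\Pi_1\neq\Pi_2$ and $\Pi_1,\Pi_2\subset Y_u$ by the dimension $h$ of $\Pi_1\cap\Pi_2$, computes (using $h^0(\mathcal{I}_{\Pi_1\cup\Pi_2/\mathbb{P}^m}(d))=\dim(S_d)-2\binom{d+k}{k}+\binom{d+h}{h}$) that each stratum has dimension $\dim(J)-\delta_h$ with $\delta_h=\sum_i\binom{d_i+k}{k}-\sum_i\binom{d_i+h}{h}-(k-h)(m+h+1-k)$, and then proves the purely arithmetic lemma that $\delta_h\leq 0$ forces $t\leq 0$ (this is where $\Pi_{i=1}^s d_i>2$ enters). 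Hence for $t>0$ no stratum dominates $J$ and the general $Y_u$ contains exactly one $k$--plane. Without this (or an equivalent) argument your proof delivers irreducibility, unirationality and $\codim_{S^*_{\underline{d}}}(W_{\underline{d},k})=t$, but neither length one nor rationality --- which are exactly the new content of the theorem beyond Miyazaki's result.
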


The proof of this theorem  is  contained in Section \ref{S2} and it extends \cite[Prop.\;2.3]{BCFS} to arbitrary $k\geq 1$. Theorem \ref{thm:Predonzan+} improves, via different and easier methods, Miyazaki's results in \cite[Cor.\;1.2]{Miyazaki}, showing that for general $u \in W_{\underline{d},k}$ one has $\deg(F_k(Y_u))=1$, which implies the rationality of $W_{\underline{d},k}$. Moreover we also get rid of Miyazaki's hypothesis $m-s\geq 2k$.

\section{The proof}\label{S2} This section is  devoted to the proof of Theorem \ref{thm:Predonzan+}.

\begin{proof}[Proof of Theorem \ref{thm:Predonzan+}] Let $\mathbb{G}:= \mathbb{G}(k,m)$ be the Grassmannian  of $k$-linear subspaces in $\mathbb P^m$ and consider the incidence correspondence
$$
J:=\left\{\left.\left(\left[\Pi\right], u \right) \in \mathbb{G} \times S^*_{\underline{d}}\right| \Pi  \subset Y_u\right\} \subset \mathbb{G} \times S^*_{\underline{d}}
$$
with the two  projections
$$\mathbb{G} \stackrel{\pi_1}{\longleftarrow} J \stackrel{\pi_2}{\longrightarrow} S^*_{\underline{d}}.$$
The map $\pi_1\colon J \to \mathbb{G}$ is surjective and, for any $[\Pi]\in \mathbb{G}$, one has
$\pi_1^{-1}\left(\left[\Pi\right]\right) =\bigoplus_{i=1}^s \big ( H^{0}\left(\mathcal{I}_{\Pi/\mathbb{P}^{m}}(d_i)\right) \setminus \{0 \}\big ),$ where $\mathcal{I}_{\Pi/\mathbb{P}^{m}}$ denotes the ideal sheaf of $\Pi$ in $\mathbb{P}^{m}$.

Thus $J$ is irreducible with
$\dim (J) = \dim (\mathbb{G}) + \dim (\pi_1^{-1}\left(\left[\Pi\right]\right)) =  (k+1)(m-k) + \sum_{i=1}^s h^0\left(\mathcal{I}_{\Pi/\mathbb{P}^{m}}(d_i)\right)$. From the exact sequence
\begin{equation}\label{eq:exact}
0\to \bigoplus_{i=1}^s  \mathcal{I}_{\Pi/\mathbb{P}^{m}}(d_i) \to \bigoplus_{i=1}^s  \mathcal{O}_{\mathbb{P}^{m}}(d_i) \to \bigoplus_{i=1}^s   \mathcal{O}_{\Pi}(d_i) \cong \bigoplus_{i=1}^s   \mathcal{O}_{\mathbb P^k}(d_i) \to 0,
\end{equation}
one gets
\begin{equation}\label{eq:dimJ}
\dim (J) = (k+1)(m-k) + \sum_{i=1}^s \binom{d_i +m}{m}  -\sum_{i=1}^s \binom{d_i +k}{k}= \dim (S_{\underline{d}}^*) - t.
\end{equation}

The next step recovers \cite[Cor.\;1.2]{Miyazaki}  via different and easier methods, and we also get rid of the hypothesis $m-s\geq 2k$ present there. We essentially adapt the argument in \cite[Proof of Prop.\,2.1]{Bo},
used for the case $t \leq 0$.

\begin{step}\label{step1}   The map $\pi_2 \colon J \to S^*_{\underline{d}}$ is generically finite onto its image
 $W_{\underline{d},k}$, which is therefore irreducible and unirational. Moreover $\codim_{S^*_{\underline{d}}}(W_{\underline{d},k}) = t$.\\
For general $u \in W_{\underline{d},k}$,  $F_k(Y_u)$ is a zero--dimensional scheme and
$Y_u$ has singular locus of dimension $\max\{-1,2k+s-m-1\}$ along any of the $k$--dimensional linear subspaces in  $F_k(Y_u)$.
\end{step}
\begin{proof}[Proof of Step \ref{step1}] One has $W_{\underline{d},k}=\pi_2(J)$, hence $W_{\underline{d},k} \subsetneq S_{\underline{d}}^*$ is irreducible and unirational, because $J$ is  rational, being an open dense subset of a vector bundle over $\mathbb{G}$.
Once one shows that $\pi_2\colon J \to W_{\underline{d},k}$ is generically finite, one deduces that $\codim_{S^*_{\underline{d}}}(W_{\underline{d},k}) = t$ from \eqref{eq:dimJ}.
 Therefore, we focus on proving that $\pi_2$ is generically finite, i.e. that
if $u\in W_{\underline{d},k}$ is a general point, then $\dim (\pi_2^{-1}(u))=0$.

Let $[\Pi] \in \mathbb{G}$ and choose $[y_0,y_1, \ldots, y_m]$ homogeneous coordinates in $\mathbb{P}^m$ such that the  ideal of $\Pi$ is $I_{\Pi} := (y_{k+1},\dots,y_m)$. For general
$
\left(\left[\Pi\right],u\right) \in \pi_1^{-1}\left(\left[\Pi\right]\right)\subset J$, with $u = (g_1, \ldots, g_s) \in W_{\underline{d},k}$,
we can write
$$
g_i = \sum_{h=k+1}^m y_h \; p_i^{(h)} + r_i, \; 1 \leq i \leq s,
$$ with
\begin{equation}\label{eq:Pt}
r_i \in (I^2_{\Pi})_{d_i} \;\; {\rm whereas} \;\; p_i^{(h)} = \sum_{|\underline{\mu}|= d_i-1} c^{(h)}_{i,\underline{\mu}} \;\; \underline{y}^{\underline{\mu}} \in \mathbb{C}[y_0,y_1, \ldots, y_k]_{d_i-1}, \quad 1 \leq i \leq s, \; k+1 \leq h \leq m,
\end{equation} where $(I^2_{\Pi})_{d_i}$ is the homogenous component of degree $d_i$ of the ideal $I^2_\Pi$,  $\underline{\mu} := (\mu_0, \cdots, \mu_k) \in \mathbb{Z}^{k+1}_{\geqslant 0}$, $|\underline{\mu}|:= \sum_{r=0}^k \mu_r$, and $\underline{y}^{\underline{\mu}} := y_0^{\mu_0} y_1^{\mu_1} \cdots y_k^{\mu_k}$. By the generality assumption on $u$, the polynomials $p_i^ {(h)}$ and $r_i$ are general.

The Jacobian matrix $(\frac {\partial g_i}{\partial y_j})_{1\leq	 i\leq s;0\leq j\leq m}$ computed along $\Pi$ takes the block form
\[
M= ( {\bf 0} \quad  {\bf P} ) \quad \text{where}\quad {\bf P}:=(p_i^ {(h)})_{1\leq i\leq s;k+1 \leq h \leq m}
\]
where the $\bf 0$--block has size $s\times (k+1)$ and ${\bf P}$ has size $s\times (m-k)$, where $m-k\geq s$ because of course $\dim(Y_u)=m-s\geq k$. By the generality of the polynomials $p_i^ {(h)}$, the locus of $\Pi$ where ${\rm rk}(M)<s$, which coincides with the singular locus of $Y_u$ along $\Pi$,  has dimension $\max\{-1,2k+s-m-1\}$ and, by Bertini's theorem, it coincides with the singular locus of $Y_u$.

Next we consider the following exact sequence of normal sheaves
\begin{equation}\label{eq:normalbundle}
0 \to N_{\Pi/Y_u} \to N_{\Pi/\mathbb{P}^m} \cong \mathcal{O}_{\mathbb{P}^{k}}(1)^{\oplus (m-k)} \to \left. N_{Y_u/\mathbb{P}^m}\right|_{\Pi} \cong \bigoplus_{i=1}^s\mathcal{O}_{\mathbb{P}^{k}}(d_i)
\end{equation}
(see \cite[Lemma 68.5.6]{stack}).
Any $\xi \in H^0(\Pi, N_{\Pi/\mathbb{P}^m})$ can be identified with a collection of $m-k$ linear forms on $\Pi \cong \mathbb{P}^k$
\[
\varphi_h^{\xi}(\underline{y}) := a_{h,0} y_0 + a_{h,1} y_1 + \cdots + a_{h,k} y_k, \; k+1 \leq h \leq m,
\]
 whose coefficients fill up the $(m-k) \times (k+1)$ matrix
$$
A_\xi:= (a_{h,j}),\; k+1 \leq h \leq m, \; 0 \leq j \leq k;
$$by abusing notation, one may identify  $\xi$ with  $A_{\xi}$.

Thus the map $H^0\left(\Pi, N_{\Pi/\mathbb{P}^m}\right) \stackrel{\sigma}{\longrightarrow} H^0\left(\Pi, \left.N_{Y_{u}/\mathbb{P}^m}\right|_{\Pi}\right)$,
arising from  \eqref{eq:normalbundle}  is given by (cf. e.g. \cite[formula (4)]{Bo})
\begin{equation}\label{eq:linsys}
A_{\xi} \stackrel{\sigma}{\longrightarrow} \left(\sum_{0 \leq j \leq k < h \leq m} a_{h,j} y_j p_i^{(h)} \right)_{1 \leq i \leq s}.
\end{equation} Notice that the assumption $t > 0$ reads as
$$(k+1) (m-k) = h^0\left(\Pi, N_{\Pi/\mathbb{P}^m}\right) < h^0\left(\Pi, \left.N_{Y_{u}/\mathbb{P}^m}\right|_{\Pi}\right)=\sum_{i=1}^s \binom{d_i+k}{k}.$$

\begin{claim}\label{claim:2} The map
$H^0\left(\Pi, N_{\Pi/\mathbb{P}^m}\right) \stackrel{\sigma}{\longrightarrow} H^0\left(\Pi, \left.N_{Y_{u}/\mathbb{P}^m}\right|_{\Pi}\right)$ is injective, equivalently $h^0(N_{\Pi/{Y_{u}}}) =0$.
In particular, for a general point $u \in W_{\underline{d},k}$, the Fano scheme $F_k({Y_u})$ contains $\{[\Pi]\}$ as a zero--dimensional integral component.
\end{claim}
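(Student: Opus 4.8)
The goal is to prove that $\sigma$ has maximal rank for the general admissible data; since $t>0$ gives $h^{0}\!\left(\Pi,N_{\Pi/\mathbb P^{m}}\right)<h^{0}\!\left(\Pi,N_{Y_{u}/\mathbb P^{m}}|_{\Pi}\right)$, maximal rank here means \emph{injective}, and injectivity is the same as $h^{0}(N_{\Pi/Y_{u}})=0$. The key structural remark is that, as $([\Pi],u)$ ranges over the fibre $\pi_{1}^{-1}([\Pi])$, the forms $p_{i}^{(h)}\in\mathbb C[y_{0},\dots,y_{k}]_{d_{i}-1}$ vary freely (the $r_{i}$, which do not enter $\sigma$, being irrelevant), while by \eqref{eq:linsys} the matrix of $\sigma$ depends \emph{linearly} on the collection $\big(p_{i}^{(h)}\big)$. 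Since the locus where a matrix has maximal rank is Zariski open, it suffices to exhibit one collection $\big(p_{i}^{(h)}\big)$ for which $\sigma$ is injective --- concretely, a suitable spread-out choice of monomials, for which the matrix of $\sigma$ is a $(0,1)$-matrix with independent columns. Equivalently, and more systematically, one shows that the projection onto $\bigoplus_{i,h}\mathbb C[\underline y]_{d_{i}-1}$ of the incidence variety
\[
Z:=\Big\{\big([\underline a],(p_{i}^{(h)})\big)\ :\ \underline a=(a_{h,j})\neq 0,\ \ \textstyle\sum_{h}\big(\sum_{j}a_{h,j}y_{j}\big)\,p_{i}^{(h)}=0\ \ \text{for }1\le i\le s\Big\}
\]
is not dominant.

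To carry this out I would stratify $\mathbb P\big(H^{0}(\Pi,N_{\Pi/\mathbb P^{m}})\big)\cong\mathbb P^{(k+1)(m-k)-1}$ by the rank $r$ of the $(m-k)\times(k+1)$ matrix $A_{\underline a}=(a_{h,j})$, i.e. by the dimension $r$ of the span of the linear forms $\varphi_{h}:=\sum_{j}a_{h,j}y_{j}$. The stratum of rank exactly $r$ has dimension $r(m+1-r)-1$, and the fibre of $Z$ over one of its points is a linear subspace of $\bigoplus_{i,h}\mathbb C[\underline y]_{d_{i}-1}$ of codimension $\sum_{i=1}^{s}\big(\binom{d_{i}+k}{k}-\binom{d_{i}+k-r}{d_{i}}\big)$, because the map $\oplus_{i}\Lambda_{i}$ with $\Lambda_{i}\colon (p^{(h)})\mapsto\sum_{h}\varphi_{h}p^{(h)}$ has image in degree $d_{i}$ equal to the degree-$d_{i}$ piece of the ideal generated by $r$ independent linear forms. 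Hence non-dominance of the projection amounts to the inequalities
\[
\sum_{i=1}^{s}\Big(\binom{d_{i}+k}{k}-\binom{d_{i}+k-r}{d_{i}}\Big)\;>\;r(m+1-r)-1\qquad\text{for all }1\le r\le\min\{m-k,\,k+1\},
\]
which, by telescoping Pascal's rule, read $\sum_{i}\sum_{j=1}^{r}\binom{d_{i}+k-j}{k-j+1}>r(m+1-r)-1$.

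The remaining task is to verify these inequalities from the hypotheses $t>0$ (that is, $\sum_{i}\binom{d_{i}+k}{k}>(k+1)(m-k)$), $m-k\ge s$, and $\Pi_{i}d_{i}>2$. For the top value $r=\min\{m-k,k+1\}$ the inequality reduces, after telescoping, to $\sum_{i}\binom{d_{i}+k}{k}>(k+1)(m-k)-1$, which is immediate from $t>0$; for smaller $r$ one exploits that the increments $\sum_{i}\binom{d_{i}+k-j}{k-j+1}$ are weakly decreasing in $j$, so the left-hand side is controlled by its value at the top $r$, and the assumption $\Pi_{i}d_{i}>2$ is precisely what rules out the degenerate low-degree configurations (all $d_{i}=1$, or $s=1$ with $d_{1}=2$) in which the estimate would be tight or fail. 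I expect this finite numerical case analysis --- the only place where $\Pi_{i}d_{i}>2$ is really used --- to be the main obstacle. Once it is settled, $h^{0}(N_{\Pi/Y_{u}})=\dim\ker\sigma=0$ for general $([\Pi],u)\in J$; since $T_{[\Pi]}F_{k}(Y_{u})=H^{0}(N_{\Pi/Y_{u}})$, the plane $[\Pi]$ is then a reduced isolated point of $F_{k}(Y_{u})$, hence a zero-dimensional integral component; and because $W_{\underline d,k}=\pi_{2}(J)$, this holds for general $u\in W_{\underline d,k}$, as claimed.
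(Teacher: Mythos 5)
Your strategy is genuinely different from the paper's: instead of exhibiting a nonzero maximal minor of the coefficient matrix of $\sigma$ (the paper follows Borcea's leading--monomial argument for $\det(\widehat{C})$), you set up an incidence variety $Z$, stratify the source $\mathbb{P}\big(H^0(\Pi,N_{\Pi/\mathbb{P}^m})\big)$ by the rank $r$ of $A_\xi$, and reduce injectivity of $\sigma$ for general $(p_i^{(h)})$ to the inequalities $\sum_{i}\big(\binom{d_i+k}{k}-\binom{d_i+k-r}{k-r}\big)>r(m+1-r)-1$ for $1\leq r\leq\min\{m-k,k+1\}$. The set-up is correct: the fibre over a rank-$r$ point is indeed $\bigoplus_i\ker\Lambda_i$, of codimension equal to the dimension of the degree-$d_i$ graded piece of an ideal generated by $r$ independent linear forms, and the stratum has dimension $r(m+1-r)-1$. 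This is a legitimate alternative route, and in fact your inequalities are exactly the statement $\delta_{k-r}(m,k,\underline{d})\geq 0$, where $\delta_h$ is the quantity appearing in the paper's Step \ref{lemma:sistemino}; so this route would make the Claim rest on the same numerical lemma that the paper only needs later, for Step \ref{step3}.

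The gap is that you do not actually prove these inequalities, and the sketch you give for them is not adequate. First, your assertion that the top value $r=\min\{m-k,k+1\}$ reduces to $\sum_i\binom{d_i+k}{k}>(k+1)(m-k)-1$ is only correct when $m-k\geq k+1$: if $m\leq 2k$ the top rank is $r=m-k$ and the telescoped left-hand side is $\sum_i\big(\binom{d_i+k}{k}-\binom{d_i+2k-m}{2k-m}\big)$, which is strictly smaller than $\sum_i\binom{d_i+k}{k}$; this is precisely the range the theorem is meant to cover (it is where the hypothesis $m-s\geq 2k$ of Miyazaki is removed). Second, for intermediate $r$ the monotonicity of the increments $g(j)=\sum_i\binom{d_i+k-j}{k-j+1}$ only yields $\sum_{j=1}^{r}g(j)\geq\frac{r}{k+1}\sum_{j=1}^{k+1}g(j)\geq r(m-k)+\frac{rt}{k+1}$, which falls short of the required $r(m+1-r)-1$ unless $t$ is large; closing this really does require the finer analysis of the paper's Step \ref{lemma:sistemino} (the polynomial $D(x)$, its positivity and monotonicity for $x>1$, and the evaluations $D(2)$, $D(3)$ where $\Pi_i d_i>2$ enters). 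So either import that lemma wholesale to justify $\delta_{k-r}\geq 0$, or switch to your first suggestion (an explicit ``spread-out'' choice of monomials for the $p_i^{(h)}$ making the matrix of $\sigma$ of full rank), which must then be written down concretely --- that is essentially what the paper does via the determinant of $\widehat{C}$. The concluding deductions ($h^0(N_{\Pi/Y_u})=0$ is the tangent space to $F_k(Y_u)$ at $[\Pi]$, hence $\{[\Pi]\}$ is a reduced isolated component, and the statement propagates to general $u\in W_{\underline{d},k}$ since $W_{\underline{d},k}=\pi_2(J)$) are fine.
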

\begin{proof}[Proof of Claim \ref{claim:2}] Using \eqref{eq:Pt}, the polynomials  on the right--hand--side of \eqref{eq:linsys} read as
$$\sum_{h=k+1}^m \; \sum_{j=0}^k \; a_{h,j} y_j \left( \sum_{|\underline{\mu}|= d_i-1} c^{(h)}_{i,\underline{\mu}} \;\; \underline{y}^{\underline{\mu}} \right) ,\;\; 1 \leq i \leq s.$$
Ordering the previous polynomial expressions via the standard lexicographical monomial order on the canonical basis $\{\underline{y}^{\underline{\mu}}\}$ of
$\mathbb{C}[y_0,y_1, \ldots, y_k]_{d_i} = H^0(\mathcal{O}_{\mathbb P^k}(d_i)), \;\; 1 \leq i \leq s$, the injectivity of the map $\sigma$ is equivalent for the homogeneous linear system
\begin{equation}\label{eq:linsysBorcea}
\sum_{0 \leq j \leq k < h \leq m} \; c^{(h)}_{i, \underline{\nu} - \underline{e}_j} \; a_{h,j} = 0, \; 1 \leq i \leq s,
\end{equation}
to have  only the trivial solution, where $\underline{\nu} := (\nu_0, \nu_1, \ldots, \nu_{k}) \in \mathbb{Z}^{k+1}_{\geqslant 0}$ is such that $|\underline{\nu}| = d_i$, $\underline{e}_j$ is the $(j+1)$--th vertex of the standard $(k+1)$-simplex in $\mathbb{Z}^{k+1}_{\geqslant 0} \setminus \{\underline{0}\}$, and $c^{(h)}_{i, \underline{\nu} - \underline{e}_j} = 0$ when $\underline{\nu} - \underline{e}_j \notin \mathbb{Z}^{k+1}_{\geqslant 0}$
(this last condition stands for ``$\underline{\nu} - \underline{e}_j$ {\em improper}" as formulated in \cite[p.\; 29]{Bo}). The linear system \eqref{eq:linsysBorcea} consists of
$\sum_{i=1}^s \binom{d_i+k}{k}$ equations in the $(k+1)(m-k)$ indeterminates $a_{h,j}$, with coefficients $c^{(h)}_{i,\underline{\mu}}$, $0 \leq j \leq k < h \leq m$.

Let $C := (c^{(h)}_{i, \underline{\nu} - \underline{e}_j})$ be the coefficient matrix of \eqref{eq:linsysBorcea}; one is reduced to show that, for general choices of the entries
$c^{(h)}_{i, \underline{\nu} - \underline{e}_j}$, the matrix $C$ has maximal rank $(k+1)(m-k)$. This can be done arguing as in \cite[p.\;29]{Bo}.
Namely, row--indices of $C$ are determined by the standard lexicographical monomial order on the canonical basis of $\bigoplus_{i=1}^s \mathbb{C} [y_0, y_1, \ldots, y_k]_{d_i}$, whereas
column--indices of $C$ are determined by the standard lexicographic order on the set of indices $(h,j)$.
If one considers the square sub--matrix $\widehat{C}$ of $C$
formed by the first $(k+1)(m-k)$ rows and by all the columns of $C$,  then $\det(\widehat{C})$ is a non--zero polynomial
in the indeterminates $c^{(h)}_{i, \underline{\mu}}$. Indeed, take the lexicographic order on the set of indices
$$(h, \; i, \; \underline{\mu}), \;\; \mbox{where} \;\; k+1 \leq h \leq m, \; |\underline{\mu}| = d_i-1, \; 1 \leq i \leq s,$$
and order the monomials appearing in the expression of $\det(\widehat{C})$ according to the following rule: the monomials $m_1$ and $m_2$ are such that $m_1 > m_2$ if, considering the smallest index $(h, \; i, \; \underline{\mu})$ for which $c^{(h)}_{i, \underline{\mu}}$ occurs in the monomial $m_1$ with exponent $p_1$ and in the monomial $m_2$ with exponent $p_2 \neq p_1$, one has $p_1 > p_2$.
The greatest monomial (in the monomial ordering described above) appearing in $\det(\widehat{C})$ has coefficient $\pm \; 1$, since in each column the choice of the $c^{(h)}_{i,  \underline{\mu}}$ entering in this monomial is uniquely determined. By maximality of such monomial, it follows that
$\det(\widehat{C}) \neq 0$, which shows that $C$ has maximal rank $(k+1)(m-k)$, i.e. the map $\sigma$ is injective.

The injectivity of $\sigma$ and \eqref{eq:normalbundle} yield $h^0(N_{\Pi/{Y_{u}}}) =0$.  Since $H^0(N_{\Pi/{Y_{u}}})$ is the tangent space to $F_k(Y_u)$ at its point $[\Pi]$, one deduces that $\{[\Pi]\}$ is a zero--dimensional, reduced component  of
$F_k(Y_u)$, as claimed.
\end{proof}

Finally, by monodromy arguments, the irreducibility of $J$ and Claim \ref{claim:2} ensure that for general $u \in W_{\underline{d},k}$, the Fano scheme $F_k(Y_u)$ is zero-dimensional and reduced, i.e. $\pi_2\colon J \to W_{\underline{d},k}$ is generically finite, and that $Y_u$ has a singular locus of dimension $\max\{-1,2k+s-m-1\}$ along any of the $k$--dimensional linear subspaces in  $F_k(Y_u)$.  This  completes the proof of
Step \ref{step1}. \end{proof}

To conclude the proof of Theorem \ref{thm:Predonzan+}, we need the following numerical result.

\begin{step}\label{lemma:sistemino} For $0\leq h\leq k-1$ integers, consider the integer
\begin{equation*}\label{eq:sistemino}
\delta_h(m,k, \underline{d}):=\sum_{i=1}^s\binom{d_i  +k}{k}-\sum_{i=1}^s\binom{d_i+h}{h}- (k-h)(m+h+1-k).
\end{equation*}
If $\delta_h(m,k, \underline{d}) \leqslant 0$, then
$$t (m,k, \underline{d})\leqslant 0.$$
\end{step}

\begin{proof} [Proof of Step \ref{lemma:sistemino}] In order to ease notation, we set $\delta_h:=\delta_h(m,k, \underline{d})$. Therefore, the condition $\delta_h \leqslant 0$ implies $m \geq \frac{1}{k-h}
 \left[
 \sum_{i=1}^s \binom{d_i +k}{k} - \binom{d_i +h}{h}\right] -(h+1-k)$. Plugging the previous inequality in the expression of $t$, one has
\begin{equation}\label{eq:calcolot}
t \leq - \sum_{i=1}^s\left[\frac{h+1}{k-h}\binom{d_i+k}{k}
 - \frac{k+1}{k-h}\binom{d_i+h}{h}\right] +(k+1)(h+1).
\end{equation} Set $D(x):= \frac{h+1}{k-h}\binom{x+k}{k}-\frac{k+1}{k-h} \binom{x+h}{h}.$ Thus, \eqref{eq:calcolot} reads
\begin{equation}\label{eq:calclot2}
t  \leq - \sum_{i=1}^s D(d_i)+(k+1)(h+1).
\end{equation}The assumption $0\leq h\leq k-1$ gives
 \begin{equation*}\label{eq:Dd}
\begin{split}
D(d_i)&=\frac{(h+1)(d_i+1)\cdots(d_i+h)}{k!(k-h)}
 \bigg((d_i+h+1)\cdots(d_i+k)-(k+1)k\cdots(h+2)
\bigg), \; 1 \leq i \leq s.
\end{split}
\end{equation*}
The polynomial $D(x)$  vanishes for $x=1$, which is its only positive root.
Notice that
\begin{equation*}
D(2)= \frac{h+1}{k-h}\binom{k+2}{k}-\frac{k+1}{k-h} \binom{h+2}{h}=\frac{(h+1)(k+1)}{2}>0.
\end{equation*} In particular, $D(x)$ is increasing and positive for $x>1$, so from \eqref{eq:calclot2} it follows that
\begin{equation*}
t \leq -\sum_{i=1}^s D(d_i)+(k+1)(h+1) \leq - s \; D(2) + (k+1)(h+1) =(k+1)(h+1)\bigg(1-\frac{s}{2} \bigg).
\end{equation*}
Therefore, when $s\geq 2$, we have $t\leq 0$ and we are done in this case.

If  $s=1$, set $d:= d_1$. In this case \eqref{eq:calclot2} is $t \leq -D(d)+(k+1)(h+1)$, where again $D(d)$ is increasing and positive for $d>1$. When $s=1$, we have $d \geq 3$ by assumption. Thus, one computes
{\small
\begin{equation*}
D(3)=(k+1)(h+1) \frac{k+h+5}{6}
\end{equation*}}and so, for any $d \geq 3$,  one has
\begin{equation*}
t  \leq  -D(d)+(k+1)(h+1) \leq -D(3)+(k+1)(h+1) = (k+1)(h+1) \frac{1-k-h}{6}.
\end{equation*}
Being $0 \leq h \leq k-1$, one deduces that $t\leq 0$, completing the proof of Step \ref{lemma:sistemino}.
\end{proof}

The final step of the proof of Theorem \ref{thm:Predonzan+} is the following.

\begin{step}\label{step3} For general $u \in W_{\underline{d},k}$, the zero--dimensional Fano scheme $F_k(Y_u)$ has length one.
In particular, the map $\pi_2 \colon J \to W_{\underline{d},k}$ is birational and $W_{\underline{d},k}$ is rational.
\end{step}

\begin{proof}[Proof of Step \ref{step3}] Let us consider the (locally closed) incidence correspondence
$$
I:=\left\{\left.\left(\left[\Pi_1\right],\left[\Pi_2\right], u \right) \in \mathbb{G} \times\mathbb{G} \times S^*_{\underline{d}}\right| \Pi_1  \neq\Pi_2,\; \Pi_i  \subset Y_u, \; 1 \leq i \leq 2\right\} \subset \mathbb{G} \times\mathbb{G} \times S^*_{\underline{d}}.
$$
If $I$  is not empty, let $\varphi\colon I \to J$ be the map defined by $$\varphi\left(\left(\left[\Pi_1\right],\left[\Pi_2\right], u \right)\right)=\left(\left[\Pi_1\right], u \right).$$
We need to prove that $\varphi$  is not dominant. To do this, consider the (locally closed) subset
$$
I_h:=\left\{\left.\left(\left[\Pi_1\right],\left[\Pi_2\right], u \right) \in I \; \right| \; \Pi_1\cap\Pi_2  \cong \mathbb{P}^h\right\}, \; {\rm where} \;  -1\leq h\leq k-1
$$
(we set $\mathbb P^ {-1}=\emptyset$, i.e. the case $h=-1$ occurs when $\Pi_1$ and $\Pi_2$ are skew). Clearly, one has
$I=\bigsqcup_{h =-1}^{k-1} I_h$. Setting $\varphi_h := \varphi_{|I_h}$, it is sufficient to prove that $\varphi_h$ is not dominant, for any $-1\leq h\leq k-1$.

So, let $h$ be such that $I_h$ is not empty, and let $T_h$ be an irreducible component of $I_h$.
Of course, if $\dim (T_h) < \dim(J)$, the restriction ${\varphi_{h|T_h}}\colon T_h\to J$ is not dominant.
On the other hand, suppose that $\dim (T_h) > \dim (J)$.
For any such a component, the map ${\varphi_{h|T_h}}$ cannot be dominant, otherwise the composition $T_h \stackrel{{\varphi_{h |T_h}}}{\longrightarrow} J \stackrel{\pi_2}{\longrightarrow} W_{\underline{d},k}$ would be dominant, as $\pi_2$ is, which would imply that the general fiber of $\pi_2$ is positive dimensional, contradicting Step \ref{step1}.

Therefore, it remains to investigate the case $\dim (T_h) = \dim  (J)$.
We  estimate the dimension of $T_h$ as follows. Consider
$$\mathbb{G}^2_h:=
 \left\{\left(\left[\Pi_1\right],\left[\Pi_2\right]) \in \mathbb{G}\times \mathbb{G} \right | \Pi_1\cap\Pi_2  \cong \mathbb{P}^h\right\}\subset \mathbb{G}\times \mathbb{G},
$$
which is locally closed in $\mathbb{G}\times \mathbb{G}$.
The projection
$$\widehat{\pi}_1\colon \mathbb{G}^2_h \to \mathbb{G}, \;\; \left(\left[\Pi_1\right],\left[\Pi_2\right]\right) {\longmapsto} \left[\Pi_1\right]$$
is surjective onto $\mathbb{G}$ and any $\widehat{\pi}_1$--fiber is irreducible, of dimension equal to $\dim \left(\mathbb{G}(h,k)\times \mathbb{G}(k-h-1,m-h-1) \right)= (h+1)(k-h)+(k-h)(m-k)$. Thus
$$
\dim\mathbb{G}^2_h= (k+1)(m-k) +(h+1)(k-h)+(k-h)(m-k).
$$
One has the projection
$$\psi_h\colon T_h {\longrightarrow} \mathbb{G}^2_h, \;\;\; \left(\left[\Pi_1\right],\left[\Pi_2\right], u\right) {\longmapsto} \left(\left[\Pi_1\right], \left[\Pi_1\right]\right),$$
which is surjective, because the projective group acts transitively on $\mathbb{G}^2_h$.
Hence $\dim (T_h) = \dim  (\mathbb{G}^2_h) + \dim  (\mathfrak{F_h})$, where
$\mathfrak{F_h} := \bigoplus_{i=1}^s \Big ( H^{0}\left(\mathcal{I}_{\Pi_1\cup\Pi_2/\mathbb{P}^{m}}(d_i)\right)\setminus \{0\}\Big )$ is the general fiber of ${\psi_{h\vert T_h}}$ and
where $\mathcal{I}_{\Pi_1\cup\Pi_2/\mathbb{P}^{m}}$ denotes the ideal sheaf of $\Pi_1\cup\Pi_2$ in $\mathbb{P}^{m}$.

\begin{claim}\label{cl:paf} For every positive integer $d$ one has
\[ h^ 0(\mathcal{I}_{\Pi_1\cup\Pi_2/\mathbb{P}^{m}}(d))=\dim(S_d)- 2 \binom{d +k}{k} + \binom{d +h}{h}. \]
  \end{claim}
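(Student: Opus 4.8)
The plan is to compute $h^0(\mathcal{I}_{\Pi_1\cup\Pi_2/\mathbb{P}^m}(d))$ by means of a Mayer--Vietoris exact sequence for ideal sheaves. First I would choose homogeneous coordinates $y_0,\dots,y_m$ on $\mathbb{P}^m$ adapted to the configuration: since $\Pi_1\cap\Pi_2\cong\mathbb{P}^h$ and the linear span of $\Pi_1$ and $\Pi_2$ has dimension $2k-h$ (so, in particular, $2k-h\leq m$), one may arrange that
\[
\Pi_1=\{y_{k+1}=\dots=y_m=0\},\qquad \Pi_2=\{y_{h+1}=\dots=y_k=0,\ y_{2k-h+1}=\dots=y_m=0\},
\]
so that $\Pi_1\cap\Pi_2=\{y_{h+1}=\dots=y_m=0\}$. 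In these coordinates $I_{\Pi_1}=(y_{k+1},\dots,y_m)$ and $I_{\Pi_2}=(y_{h+1},\dots,y_k,y_{2k-h+1},\dots,y_m)$, whence $I_{\Pi_1}+I_{\Pi_2}=(y_{h+1},\dots,y_m)=I_{\Pi_1\cap\Pi_2}$ (when $h=-1$ this sum is the irrelevant ideal, so $\mathcal{I}_{\Pi_1\cap\Pi_2/\mathbb{P}^m}=\mathcal{O}_{\mathbb{P}^m}$), while $\mathcal{I}_{\Pi_1\cup\Pi_2/\mathbb{P}^m}=\mathcal{I}_{\Pi_1/\mathbb{P}^m}\cap\mathcal{I}_{\Pi_2/\mathbb{P}^m}$ by definition of the scheme--theoretic union. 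Hence there is a short exact sequence
\[
0\to \mathcal{I}_{\Pi_1\cup\Pi_2/\mathbb{P}^m}\to \mathcal{I}_{\Pi_1/\mathbb{P}^m}\oplus\mathcal{I}_{\Pi_2/\mathbb{P}^m}\stackrel{\rho}{\longrightarrow} \mathcal{I}_{\Pi_1\cap\Pi_2/\mathbb{P}^m}\to 0,
\]
where $\rho$ is the difference of the two inclusions.

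Next I would twist this sequence by $\mathcal{O}_{\mathbb{P}^m}(d)$ and pass to cohomology. For a linear subspace $\Lambda\subset\mathbb{P}^m$ of dimension $\ell$, the restriction map $H^0(\mathcal{O}_{\mathbb{P}^m}(d))\to H^0(\mathcal{O}_\Lambda(d))$ is surjective and $H^1(\mathbb{P}^m,\mathcal{O}(d))=0$, so $H^1(\mathbb{P}^m,\mathcal{I}_{\Lambda/\mathbb{P}^m}(d))=0$ and $h^0(\mathcal{I}_{\Lambda/\mathbb{P}^m}(d))=\dim(S_d)-\binom{d+\ell}{\ell}$ for every $d\geq 0$. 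Applying this to $\Pi_1,\Pi_2\cong\mathbb{P}^k$ and to $\Pi_1\cap\Pi_2\cong\mathbb{P}^h$, the long exact cohomology sequence reads
\[
0\to H^0(\mathcal{I}_{\Pi_1\cup\Pi_2/\mathbb{P}^m}(d))\to H^0(\mathcal{I}_{\Pi_1/\mathbb{P}^m}(d))\oplus H^0(\mathcal{I}_{\Pi_2/\mathbb{P}^m}(d))\stackrel{\rho}{\longrightarrow} H^0(\mathcal{I}_{\Pi_1\cap\Pi_2/\mathbb{P}^m}(d))\to H^1(\mathcal{I}_{\Pi_1\cup\Pi_2/\mathbb{P}^m}(d))\to 0.
\]
Once $\rho$ is shown to be surjective, taking dimensions gives
\[
h^0(\mathcal{I}_{\Pi_1\cup\Pi_2/\mathbb{P}^m}(d))=2\Big(\dim(S_d)-\binom{d+k}{k}\Big)-\Big(\dim(S_d)-\binom{d+h}{h}\Big)=\dim(S_d)-2\binom{d+k}{k}+\binom{d+h}{h},
\]
which is exactly the asserted formula, with the convention $\binom{d-1}{-1}=0$ covering the case $h=-1$.

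The only substantial point, which I expect to be the main obstacle, is the surjectivity of $\rho$ on global sections, equivalently the vanishing $H^1(\mathbb{P}^m,\mathcal{I}_{\Pi_1\cup\Pi_2/\mathbb{P}^m}(d))=0$; this is where the hypothesis $d\geq 1$ is used. In the coordinates above it is elementary: any $H\in H^0(\mathcal{I}_{\Pi_1\cap\Pi_2/\mathbb{P}^m}(d))$ lies in $(y_{h+1},\dots,y_m)_d$ (for $h\geq 0$ since $H$ vanishes on $\Pi_1\cap\Pi_2$, for $h=-1$ since $d\geq 1$), so one may write $H=\sum_{j=h+1}^m y_j H_j$ with $\deg(H_j)=d-1$; then $F_1:=\sum_{j=k+1}^m y_j H_j$ lies in $H^0(\mathcal{I}_{\Pi_1/\mathbb{P}^m}(d))$, $F_2:=-\sum_{j=h+1}^k y_j H_j$ lies in $H^0(\mathcal{I}_{\Pi_2/\mathbb{P}^m}(d))$, and $\rho(F_1,F_2)=F_1-F_2=H$. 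This completes the proof of the claim.

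Alternatively, one can bypass the cohomological bookkeeping altogether: in the chosen coordinates $\mathcal{I}_{\Pi_1\cup\Pi_2/\mathbb{P}^m}$ corresponds to the monomial ideal $I_{\Pi_1}\cap I_{\Pi_2}$, which is saturated, being an intersection of linear primes. Hence $h^0(\mathcal{I}_{\Pi_1\cup\Pi_2/\mathbb{P}^m}(d))$ is the number of degree $d$ monomials in $y_0,\dots,y_m$ divisible both by some $y_j$ with $k+1\leq j\leq m$ and by some $y_j$ with $j\in\{h+1,\dots,k\}\cup\{2k-h+1,\dots,m\}$; an inclusion--exclusion over these two conditions (the total number of monomials, minus those supported on $\{y_0,\dots,y_k\}$, minus those supported on $\{y_0,\dots,y_h,y_{k+1},\dots,y_{2k-h}\}$, plus those supported on $\{y_0,\dots,y_h\}$) gives $\dim(S_d)-\binom{d+k}{k}-\binom{d+k}{k}+\binom{d+h}{h}$, the same value.
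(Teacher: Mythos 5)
Your proof is correct, and it is organized differently from the paper's, so a comparison is worthwhile. The paper restricts the linear system $|\mathcal{I}_{\Pi_1/\mathbb{P}^m}(d)|$ to $\Pi_2$: the kernel of the restriction on global sections is $H^0(\mathcal{I}_{\Pi_1\cup\Pi_2/\mathbb{P}^m}(d))$, and the traced system on $\Pi_2$ is shown to be the complete system of degree $d$ hypersurfaces of $\Pi_2$ containing $\Pi_1\cap\Pi_2$, by exhibiting the reducible members (a hyperplane of $\Pi_2$ through the intersection plus a residual hypersurface of degree $d-1$). That is precisely the asymmetric version of your symmetric Mayer--Vietoris sequence
\[
0\to \mathcal{I}_{\Pi_1\cup\Pi_2/\mathbb{P}^m}\to \mathcal{I}_{\Pi_1/\mathbb{P}^m}\oplus\mathcal{I}_{\Pi_2/\mathbb{P}^m}\to \mathcal{I}_{\Pi_1\cap\Pi_2/\mathbb{P}^m}\to 0;
\]
in both treatments the crux is the same surjectivity statement, which you establish by the explicit splitting $H=\sum_{j=h+1}^m y_jH_j$ in adapted coordinates. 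Your version is, if anything, more transparent than the paper's one--line justification, and it handles the skew case $h=-1$ uniformly (via the convention $\binom{d-1}{-1}=0$ and the observation that $I_{\Pi_1}+I_{\Pi_2}$ is then the irrelevant ideal), whereas the paper's phrasing in terms of hyperplanes through $\Pi=\Pi_1\cap\Pi_2$ is slightly awkward when that intersection is empty. Your second argument --- counting degree $d$ monomials in the saturated monomial ideal $I_{\Pi_1}\cap I_{\Pi_2}$ by inclusion--exclusion --- is genuinely different and more elementary: it uses no cohomology at all and makes visible why the answer is an inclusion--exclusion of the three binomial coefficients. Either route constitutes a complete and correct proof of the claim.
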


  \begin{proof}[Proof of Claim \ref  {cl:paf}] We have
  \begin{equation}\label{eq:acc}
  h^ 0(\mathcal{I}_{\Pi_1/\mathbb{P}^{m}}(d))=\dim(S_d)-  \binom{d +k}{k}.
 \end{equation}
Consider the linear system $\Sigma$ cut out on $\Pi_2$ by $|\mathcal{I}_{\Pi_1/\mathbb{P}^{m}}(d)|$. We claim that $\Sigma$ is the complete linear system of hypersurfaces of degree $d$ of $\Pi_2$ containing $\Pi:=\Pi_1\cap \Pi_2$. Indeed $\Sigma$  contains all hypersurfaces consisting of a hyperplane through $\Pi$ plus a hypersurface of degree $d-1$ of $\Pi_2$, which proves our claim.
In the light of this fact, and arguing as in \eqref{eq:exact} and \eqref{eq:dimJ}, we deduce that
\begin{equation*}
h^ 0(\mathcal{I}_{\Pi_1\cup \Pi_2/\mathbb{P}^{m}}(d))= h^ 0(\mathcal{I}_{\Pi_1/\mathbb{P}^{m}}(d))-\left(\dim(\Sigma)+1\right)=h^0(\mathcal{I}_{\Pi_1/\mathbb{P}^{m}}(d))-\Bigg (\binom{d +k}{k}-\binom{d +h}{h}\Bigg),
\end{equation*}
which, by \eqref {eq:acc}, yields the assertion. \end{proof}

By Claim \ref {cl:paf} we have
\begin{equation*}
\begin{split}
\dim  (\mathfrak{F_h})&=\dim  (S_{\underline{d}}^*) - 2 \sum_{i=1}^s \binom{d_i +k}{k}
  + \sum_{i=1}^s \binom{d_i +h}{h}.
 \end{split}
\end{equation*}Hence
\begin{equation}\label{eq:*14Nov}
\begin{split}
  \dim  (T_h) & = \dim  (\mathfrak{F_h}) + \dim  (\mathbb{G}^2_h)=\\
  &=\dim  (S_{\underline{d}}^*) - 2 \sum_{i=1}^s \binom{d_i +k}{k}
  + \sum_{i=1}^s \binom{d_i +h}{h}+\\
  &+
  (k+1)(m-k) +(h+1)(k-h)+(k-h)(m-k)=\\
  &= \dim  (J) - \sum_{i=1}^s \binom{d_i +k}{k}
  + \sum_{i=1}^s \binom{d_i +h}{h}+(k-h)(m+h+1-k) = \\
	&= \dim  (J) - \delta_h.
 \end{split}
\end{equation} Since $\dim  (T_h) = \dim  (J)$, \eqref{eq:*14Nov} implies $\delta_h =0$.
When $0 \leq h\leq k-1$, Step \ref{lemma:sistemino} gives
$t \leq 0$, contrary to our assumption.  When $h=-1$, one has
$0=\delta_{-1} = t$, again against our assumptions.

Since no component $T_h\subset I_h$ can dominate $J$, the map $\varphi\colon I\to J$ is not dominant.
We conclude therefore that the map $\pi_2 \colon J \to W_{\underline{d},k} $ is birational, completing the proof of Step \ref{step3}.
\end{proof}

Steps \ref{step1}--\ref{step3} prove Theorem \ref{thm:Predonzan+}.
\end{proof}


\section*{Acknowledgements}
We would like to thank Enrico Fatighenti and Francesco Russo for helpful discussions.


\end{document}